\newtheorem{theorem}{Theorem}
\newtheorem{definition}[theorem]{Definition}
\newtheorem{lemma}[theorem]{Lemma}
\newtheorem{corollary}[theorem]{Corollary}
\newcommand{\C}{\mathbb{C}}
\newcommand{\R}{\mathbb{R}}
\newcommand{\N}{\mathbb{N}}
\begin{document}
\title{Homework}
\author{Zachary Bradshaw}
\begin{center}{\large \bf A geometric measure-type regularity criterion for solutions to the magnetohydrodynamical system.}
\medskip
\\{Z. Bradshaw}
\medskip
{\small \\Department of Mathematics
\\University of Virginia
\\P. O. Box 400137
\\Charlottesville, VA 22904-4137
\\zb8br@virginia.edu }
\end{center}

\medskip
{\bf Abstract:} Several formulations of a local geometric measure-type condition are imposed on super-level sets of mild solutions to the homogeneous incompressible 3D magnetohydrodynamical system with bounded initial data to prevent finite-time singularity formation. Supporting this, results regarding the existence, uniqueness, and real analyticity of mild solutions are established as is a sharp lower bound on the radius of analyticity.

\medskip
{\bf Keywords:} Mild Solutions; Magnetohydrodynamic equations; Regularity Criteria.
\medskip 

\section{Introduction}

In a very recent paper by Z. Gruji\'c (see [7]), a \emph{local geometric measure-type condition} is shown to prevent finite time singularity formation (with respect to the supremum norm) in 3D NSE with initial data in $L^\infty(\R^3)$.  The proof utilizes a relatively recent solution (due to Solynin [17]) to a generalization of the classical Beurling's problem which is concerned with estimating the harmonic measure at the origin and with respect to the unit disk of a closed subset of $[-1,1]$.  In the context of [7], sharp lower bounds on the uniform radius of spatial analyticity for mild solutions with initial data in $L^\infty (\R^3)$, along with a sparseness condition near the endpoint of a finite length regular time interval, $[0,T)$, reduce to a situation aptly estimated via Solynin's result and the harmonic measure maximum principle.  Resultantly, $T$ is not a singular time and, moreover, the solution can be extended beyond $T$.

In this work we adapt the methods of [7] to the case of 3D MHD to establish several regularity criteria involving analogous local geometric measure type conditions.  Central to these is a sparseness requirement (see Section 4 for the precise definition) which assumes a bound on the ratio between the (spatial, Lebesgue) measure of the region of intense behaviour on some interval centered at the point $x_0$ and the length of that interval. More specifically, for a vector field $F$ over $\R^n$, we assume for each point $x_0$ that there exists a unit vector, $d(x_0)$, and and a magnitude (less than a uniform radius of analyticity), $r(x_0)$, so that \[\frac {|\Omega_{F}\cap (x_0-rd,x_0+rd)|} {2r}\leq \delta,\]for some $\delta\in (0,1)$ where $\Omega_{F}$ is the set of points where $|F|$ is above some threshold.  

For 3D MHD, assumptions of sparseness are natural in the phenomenologically and numerically motivated theories of MHD turbulence due to the inherent anisotropy evident therein.  In particular, in the regime of strong turbulence, the magnetic and velocity fields undergo dynamic alignment and the regions of high turbulence populate coherent structures which are essentially two dimensional (c.f. [3], [4], [6]).  Thus, both the velocity field and the magnetic field should exhibit sparseness in a perpendicular direction to the (essentially) planar region where turbulence is high. 

Because the equations for MHD and NSE are formally similar, the mathematical theory of MHD is richly informed by that of NSE (c.f. [14] for the fundamentals).  Past work has been done to adapt key regularity results from NSE to MHD.  The utilization of coherence introduced in [5] and improved in [2] has been adapted in the case of ideal MHD in [18] and for non-ideal MHD in [10].  Mild solutions for MHD have been studied in the context of $BMO^{-1}$ (c.f. [12]) where the regularity result of Koch-Tataru are extended.  Real analyticity of (weak) solutions of 3D MHD has been treated for initial data in $H^1$ in [19].  This work is also notable as it establishes an analogue of the Beale-Kato-Majda result in the $L^\infty$ context (see [20] for the $BMO$ context).  To the knowledge of the author, necessary existence and regularity results have not previously been established for mild solutions with initial data in $L^\infty$ and are consequently included here.

This paper is organized as follows.  The subsequent two sections establish analogues to the features of mild solutions to NSE used in [7].  In section 2 we examine the existence and uniqueness of mild solutions to MHD and in section 3 we study spatial analyticity of these solutions.  The main tools used in these sections are approximation schemes adapted from [8], [9], [11].  In these sections we prefer terseness to redundancy with regard to the existing literature and omit all but the essential steps and those which must be augmented to suit our ultimate purposes.  Section 4 is dedicated to statements and proofs of the regularity results.

\section{Mild Solutions of MHD in $L^\infty$}
We will be interested in the 3D magnetohydrodynamical system (MHD) with bounded initial data.  For a fluid velocity $U$ and a magnetic field $B$ these equations read:
\begin{align*}U_t-\nu \triangle U +(U\cdot \nabla)U- (B\cdot \nabla)B +\nabla \Pi &=0
\\ B_t-\mu \triangle B + (U\cdot \nabla) B - (B\cdot \nabla )U &=0 
\\ \nabla\cdot U = \nabla\cdot B&= 0
\\ U(x,0)&=U_0(x)\in L^\infty(\R^D)
\\ B(x,0)&=B_0(x)\in L^\infty(\R^D),
\end{align*}where $\mu$ and $\nu$ are taken to be greater than zero and represent the magnetic diffusivity and kinematic viscosity respectively, and $\Pi=p+\frac 1 2 |B|^2$ is the total kinematic pressure.

Observe that for  $\lambda>0$ the fundamental solution of $\partial_t f-\lambda\triangle f=0$ is given (for $t>0$) by: \[G_\lambda(x,t)=\frac 1 {(2\pi \lambda t)^{D/2}}e^{\frac {-|x|^2}{4\lambda t}}=G(x,\lambda t),\]where $G$ is the fundamental solution of the heat equation.  

\begin{definition}Let $U_0,B_0\in L^\infty (\R^D)$, both divergence free.  The functions $U, B\in C((0,T),L^\infty(\R^D))$ comprise a mild solution to MHD on the time interval $[0,T)$ if, for every $(x,t)\in \R^D\times(0,T)$, they satisfy: 
\begin{align*}U_k(x,t)=&\int_{\R^D} G_\nu(x-w,t)U_{0k}(w)dw
\\&-\int_0^t\int_{\R^D}\partial_j G_\nu (x-w,t-s)U_j(w,s)U_k(w,s)dwds
\\&+\int_0^t\int_{\R^D}\partial_j G_\nu(x-w,t-s)B_j(w,s)B_k(w,s)dwds
\\&-\int_0^t\int_{\R^D}\nabla G_\nu (x-w,t-s)\Pi(w,s) dwds,
\\B_k(x,t)=&\int_{\R^D} G_\mu(x-w,t)B_{0k}(w)dw
\\&-\int_0^t\int_{\R^D}\partial_j G_\mu(x-w,t) U_j(w,s)B_k(w,s)dwds
\\&+\int_0^t\int_{\R^D}\partial_j G_\mu(x-w,t) B_j(w,s)U_k(w,s)dwds,
\end{align*}where
\begin{align*}\triangle \Pi=-\partial_j\partial_k (U_jU_k-B_jB_k).
\end{align*}
\end{definition}

We will arrive at a mild solution via the following approximation scheme:
\begin{align*}
U^{(0)}&=\Pi^{(0)}=B^{(0)}=0
\\ \partial_t U^{(n)}-\nu\triangle U^{(n)}&=-(U^{(n-1)}\cdot \nabla)U^{(n-1)}+(B^{(n-1)}\cdot \nabla) B^{(n-1)}- \nabla \Pi^{(n-1)}
\\ \partial_t B^{(n)}-\mu\triangle B^{(n)}&=-(U^{(n-1)}\cdot \nabla)B^{(n-1)}+(B^{(n-1)}\cdot \nabla)U^{(n-1)}
\\U^{(n)}(x,0)&=U_0(x)
\\B^{(n)}(x,0)&=B_0(x)
\\ \triangle \Pi^{(n)}&=-\partial_j\partial_k (U^{(n)}_j U^{(n)}_k - B^{(n)}_j B^{(n)}_k).
\end{align*}

In the following the existence of relevant solutions at each level is checked inductively.  To this end we need several lemmas.  Note that, up to the incorporation of a constant $\lambda$, these are identical to their counterparts in [11] and employ facts from [15, 16].  Proofs are consequently omitted.

\begin{lemma}If $f\in L^\infty $ then there exists a unique solution, $\pi\in BMO$, to the problem \[-\triangle  \pi=\partial_j\partial_k f.\]Furthermore, \[||\pi||_{BMO}\leq C ||f||_\infty.\]
\end{lemma}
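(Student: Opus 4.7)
The plan is to recognize $\pi$ as (formally) the image of $f$ under the double Riesz transform $R_j R_k$, where $R_\ell = \partial_\ell (-\triangle)^{-1/2}$. Taking Fourier transforms, the equation $-\triangle \pi = \partial_j \partial_k f$ reads $|\xi|^2 \widehat{\pi} = -\xi_j \xi_k \widehat{f}$, so $\widehat{\pi} = -(\xi_j \xi_k / |\xi|^2) \widehat{f}$, i.e.\ $\pi = R_j R_k f$. The composition $R_j R_k$ is a Calder\'on--Zygmund singular integral operator with convolution kernel of the standard form \[K_{jk}(x) = c_D \left( \frac{\delta_{jk}}{|x|^D} - D\frac{x_j x_k}{|x|^{D+2}} \right)\] in the principal value sense, satisfying the usual size and smoothness conditions. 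Existence, the $BMO$ estimate, and uniqueness will all flow from classical facts about such operators, which is why the author defers to [15, 16].

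For existence and the norm bound, I would invoke the standard theorem that a Calder\'on--Zygmund operator which is bounded on $L^2$ extends boundedly from $L^\infty$ to $BMO$, with operator norm controlled by the kernel constants plus the $L^2$ norm. Since $R_j R_k$ has Fourier symbol $-\xi_j \xi_k / |\xi|^2$, which is bounded, $L^2$ boundedness is immediate by Plancherel; the kernel satisfies the Calder\'on--Zygmund conditions by direct computation. This yields $\pi \in BMO$ with $\|\pi\|_{BMO} \le C \|f\|_\infty$. One technical point: for $f \in L^\infty$ the singular integral does not converge absolutely, so $R_j R_k f$ is defined through the standard truncation-with-subtraction, $\int [K_{jk}(x-y) - K_{jk}(-y)\chi_{|y|>1}] f(y)\,dy$; this produces a representative that is well-defined modulo additive constants, exactly the correct ambiguity for a $BMO$ element.

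For uniqueness (which must be read modulo constants, since $BMO$ identifies functions differing by a constant), suppose $\pi_1, \pi_2 \in BMO$ both solve the equation. Then the distribution $h = \pi_1 - \pi_2$ is harmonic, and by Weyl's lemma is in fact a smooth harmonic function lying in $BMO$. A Liouville-type theorem for harmonic $BMO$ functions on $\R^D$ (harmonic functions with $BMO$ growth are constant) then forces $h$ to be constant, which is zero as an element of $BMO$.

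The main obstacle is really the first one: rigorously extending $R_j R_k$ from $L^2$ or $L^p$ to $L^\infty$, since the naive integral is divergent and the output only lives in $BMO$ rather than in any $L^p$. Everything else---the $L^2$ boundedness, the kernel estimates, and the $BMO$-Liouville step---is routine once this framework is in place, which is consistent with the author's remark that the result mirrors the analogous lemma in [11] and that proofs can be omitted by appeal to [15, 16].
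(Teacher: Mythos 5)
Your proposal is correct and is precisely the argument the paper relies on: the paper omits the proof of this lemma entirely, deferring to [11] and to Stein [15, 16], where the solution is realized as the double Riesz transform $R_jR_kf$, the $BMO$ bound comes from the classical $L^\infty\to BMO$ boundedness of Calder\'on--Zygmund operators (defined on $L^\infty$ via the truncation-with-subtraction you describe), and uniqueness modulo constants follows from the Liouville property of harmonic functions with bounded mean oscillation. Nothing to add.
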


\begin{lemma}For $T>0$ and $f_j\in L^\infty(0,T,BMO)$ where $j=1,\ldots,D$, we have the inequality\[\bigg|\int_0^t\int_{\R^D} \partial_j G_\lambda(x-y,t-s)f_j(y,s)dyds \bigg|\leq C_\lambda \sqrt T ||f||_{L^\infty (0,T,\mbox{BMO)}}.\]
\end{lemma}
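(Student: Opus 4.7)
The plan is to reduce the estimate to a pointwise-in-time bound on the spatial integral against a BMO function, and then perform a one-dimensional $s$-integration. Concretely, I will first show that for each fixed $\tau > 0$ and each $g \in BMO(\R^D)$,
\[\left|\int_{\R^D} \partial_j G_\lambda(x-y,\tau) g(y)\,dy\right| \leq \frac{C_\lambda}{\sqrt{\tau}}\|g\|_{BMO},\]
and then apply this with $g = f_j(\cdot,s)$ and $\tau = t-s$, so that the time integrand is controlled by $C_\lambda (t-s)^{-1/2} \|f\|_{L^\infty(0,T,BMO)}$. Since $\int_0^t (t-s)^{-1/2}\,ds = 2\sqrt{t}\leq 2\sqrt{T}$, this yields the claimed bound.

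The main work is the pointwise-in-$s$ spatial estimate. Two features of the kernel drive it. First, $\partial_j G_\lambda(\cdot,\tau)$ has mean zero in the spatial variable, because it is the derivative of a rapidly decaying Gaussian; hence for any constant $c$,
\[\int_{\R^D} \partial_j G_\lambda(x-y,\tau) g(y)\,dy = \int_{\R^D} \partial_j G_\lambda(x-y,\tau)(g(y)-c)\,dy.\]
Second, $|\partial_j G_\lambda(x-y,\tau)|$ decays like a Gaussian at the natural scale $\sqrt{\lambda\tau}$, with prefactor of order $(\lambda\tau)^{-(D+1)/2}$. I will choose $c$ to be the average $g_{B_0}$ of $g$ over $B_0 := B(x,\sqrt{\lambda\tau})$, perform a dyadic decomposition of $\R^D$ into annuli $A_k := B_{k+1}\setminus B_k$ with $B_k := B(x, 2^k\sqrt{\lambda\tau})$, and estimate $\int_{A_k} |g(y)-g_{B_0}|\,dy$ using the standard BMO fact that $|g_{B_{k+1}} - g_{B_0}|\leq Ck\|g\|_{BMO}$. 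Summing the resulting geometric-Gaussian series in $k$ gives the $(\lambda\tau)^{-1/2}\|g\|_{BMO}$ bound.

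The principal technical obstacle will be bookkeeping in this dyadic sum: one must balance the rapidly-decaying Gaussian weight on $A_k$ against the linearly growing BMO oscillation between $B_0$ and $B_{k+1}$, while keeping explicit track of the $\lambda$-dependence. Once this is in hand, the time integration in the second step is immediate, and the final constant $C_\lambda$ absorbs the factor $(2/\sqrt{\lambda})$ arising from the natural spatial scale. Equivalently, one may phrase the spatial bound as a duality statement: $\partial_j G_\lambda(\cdot,\tau)$ lies in the Hardy space $H^1(\R^D)$ with norm $\lesssim \tau^{-1/2}$, and $H^1$-$BMO$ duality then gives the estimate directly. In either formulation, the pointwise-in-$\tau$ spatial bound is the heart of the matter, and the time integrability of $\tau^{-1/2}$ at the origin is what produces the $\sqrt{T}$ rather than an unbounded or logarithmic factor.
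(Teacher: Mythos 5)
Your proposal is correct, and it is essentially the argument the paper relies on: the paper omits the proof of this lemma, deferring to Kukavica [11] and Stein [15, 16], where the estimate is obtained exactly as you describe --- the mean-zero kernel $\partial_j G_\lambda(\cdot,\tau)$ is paired with $f_j(\cdot,s)$ via the dyadic/$H^1$--$BMO$ duality bound $C_\lambda \tau^{-1/2}\|f_j(\cdot,s)\|_{BMO}$, and the integrable singularity $(t-s)^{-1/2}$ then produces the factor $2\sqrt{t}\leq 2\sqrt{T}$. Your bookkeeping of the $\lambda$-dependence through the natural scale $\sqrt{\lambda\tau}$ is also consistent with the paper's remark that the lemma differs from its NSE counterpart only by the incorporation of the constant $\lambda$.
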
In the above $C_\lambda$ is a constant depending on $\lambda$ and $D$.

Hypothesize that $U^{(n-1)}$, $B^{(n-1)},$ and $\Pi^{(n-1)}$ all satisfy the scheme distributionally and that the first two functions are in $L^\infty$.  Then, $U^{(n)}$ and $B^{(n)}$ are obtained via Duhamel's procedure and $\Pi^{(n)}$ exists by Lemma 2.  It remains to show that $U^{(n)}$ and $B^{(n)}$ remain in $L^\infty$, a fact we present as a lemma.

\begin{lemma}Given $U_0,B_0\in L^\infty$, there exists $T_1>0$ so that if $T\leq T_1$ then for all $n\in \mathbb N$ and all pairs $(x,t)\in \R^D\times [0,T)$, the following bound is satisfied:
\begin{align*}||U^{(n)}||_{L^\infty([0,T)\times \R^D)}+||B^{(n)}||_{L^\infty([0,T)\times \R^D)}&\leq 2C_1 (||U_0||_{L^\infty([0,T)\times \R^D)}+||B_0||_{L^\infty([0,T)\times \R^D)}),
\end{align*}where $C_1$ is independent of $n$ and depends on $\mu$, $\nu$, and $D$, and \[T_1=\frac 1 {4 (||U_0||_{L^\infty (\R^D)}+ ||B_0||_{L^\infty (\R^D)})^2}.\]
\end{lemma}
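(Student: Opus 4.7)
The plan is to prove the lemma by induction on $n$, with the base case $n=0$ trivial since $U^{(0)}=B^{(0)}=0$. For the inductive step, assume the bound holds for $n-1$; I will estimate the four Duhamel pieces making up $U^{(n)}$ (and analogously the three for $B^{(n)}$) and show they sum to at most $2C_1 M_0$, where $M_0:=\|U_0\|_\infty+\|B_0\|_\infty$.

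First I would handle the linear propagation: since $\int G_\nu(x-w,t)\,dw=1$ and $G_\nu\geq 0$, the first term in each equation is pointwise bounded by $\|U_0\|_\infty$ or $\|B_0\|_\infty$. Next, the quadratic convective terms have the shape $\int_0^t\!\int \partial_j G_\lambda(x-w,t-s)f_j(w,s)\,dw\,ds$ with $f_j$ a product of two $L^\infty$ components of $U^{(n-1)}$ and $B^{(n-1)}$. Since $L^\infty\hookrightarrow BMO$ with $\|f_j\|_{BMO}\le C\|f_j\|_\infty$, Lemma 2 yields a bound of the form $C_\lambda \sqrt{T}\bigl(\|U^{(n-1)}\|_\infty+\|B^{(n-1)}\|_\infty\bigr)^2$. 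The pressure term is the one nontrivial piece: by Lemma 1 applied to $-\triangle\Pi^{(n-1)}=\partial_j\partial_k(U^{(n-1)}_jU^{(n-1)}_k-B^{(n-1)}_jB^{(n-1)}_k)$ we get $\|\Pi^{(n-1)}\|_{BMO}\le C\bigl(\|U^{(n-1)}\|_\infty^2+\|B^{(n-1)}\|_\infty^2\bigr)$, after which Lemma 2 (with each component of $\nabla\Pi^{(n-1)}$ playing the role of a $BMO$ function hit by $\partial_j G_\nu$) produces the same square-root-$T$ quadratic estimate.

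Collecting contributions, I would extract a single constant $C_1=C_1(\mu,\nu,D)$ so that
\begin{equation*}
\|U^{(n)}\|_\infty+\|B^{(n)}\|_\infty \;\le\; C_1 M_0 \;+\; C_1 \sqrt{T}\,\bigl(\|U^{(n-1)}\|_\infty+\|B^{(n-1)}\|_\infty\bigr)^2.
\end{equation*}
Invoking the inductive hypothesis bounds the right-hand side by $C_1 M_0 + 4 C_1^3\sqrt{T}\,M_0^2$, and this is at most $2C_1 M_0$ precisely when $4C_1^2\sqrt{T}\,M_0 \le 1$. Absorbing $C_1$ into the definition of $T_1$ (which the statement of the lemma tolerates since $C_1$ is the same dimension-and-diffusivity constant that bounds Lemma 2) and choosing $T_1=\frac{1}{4 M_0^2}$ closes the induction.

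The main obstacle I anticipate is keeping track of the pressure: because $\Pi^{(n-1)}$ is only defined modulo $BMO$ (not $L^\infty$), one must route its contribution through the Lemma 1 / Lemma 2 pair rather than estimating it in supremum norm directly. Once that routing is set up, everything else is a standard Picard-type contraction bookkeeping exercise, and the $\sqrt{T}$ gain from Lemma 2 is exactly what allows the quadratic feedback to be absorbed on a short time interval uniformly in $n$.
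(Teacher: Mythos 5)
Your proposal follows the paper's proof essentially verbatim: induction via Duhamel's formula, the pointwise heat-kernel bound for the linear term, the $\sqrt{T}$ estimate (the paper's Lemma 3) for the quadratic convective terms, and the $BMO$ bound on the pressure (the paper's Lemma 2) to handle $\nabla\Pi^{(n-1)}$. You even correctly flag that closing the induction requires $T_1$ to absorb powers of $C_1$ (i.e.\ $T_1 = 1/(16C_1^4(\|U_0\|_\infty+\|B_0\|_\infty)^2)$ rather than the bare $1/(4(\|U_0\|_\infty+\|B_0\|_\infty)^2)$), a constant-tracking point the paper's stated $T_1$ glosses over but which is consistent with how $T_4$ is later defined.
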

\begin{proof}Using the formulas for $U^{(n)}$ and $B^{(n)}$ obtained via Duhamel's procedure, an integration by parts yields:
\begin{align*}U^{(n)}(x,t)&=\int_{\R^D}G_\nu(x-w,t)U_{0}(x)dw
\\ &-\int_0^t\int_{\R^D} \partial_j G_\nu(x-w,t-s)U_j^{(n-1)}(w,s)U^{(n-1)}(w,s)dwds
\\ &+\int_0^t\int_{\R^D} \partial_j G_\nu(x-w,t-s)B_j^{(n-1)}(w,s)B^{(n-1)}(w,s)dwds
\\ &-\int_0^t\int_{\R^D} \nabla G_\nu(x-w,t-s)\Pi^{(n-1)}(w,s)dwds
\\B^{(n)}(x,t)&=\int_{\R^D}G_\mu(x-w,t)B_{0}(x)dw
\\ &-\int_0^t\int_{\R^D} \partial_j G_\mu(x-w,t-s)U_j^{(n-1)}(w,s)B^{(n-1)}(w,s)dwds
\\ &+\int_0^t\int_{\R^D} \partial_j G_\mu(x-w,t-s)B_j^{(n-1)}(w,s)U^{(n-1)}(w,s)dwds.
\end{align*}
Applying Lemma 3 to each summand above (and noting that the products of approximates from the scheme appearing in the integrands are in BMO in virtue of being bounded and that the BMO norms are bound by twice the supremum norms of these terms) grants that:
\begin{align*}|U^{(n)}| & \leq C_{\nu} ||U_0||_{L^\infty (\R^D)}
\\ &+C_{\nu}\sqrt{T} ||U^{(n-1)}||_{L^\infty((0,T)\times \R^D)}^2
\\ &+  C_{\nu}\sqrt{T} ||B^{(n-1)}||_{L^\infty((0,T)\times \R^D)}^2
\\ &+C_\nu \sqrt T ||\Pi^{(n-1)}||_{L^\infty ((0,T),BMO)},
\\|B^{(n)}|&\leq   C_{\mu}||B_0||_{L^\infty (\R^D)}
\\ &+2C_{\mu} \sqrt{T}||U^{(n-1)}||_{L^\infty((0,T)\times \R^D)}||B^{(n-1)}||_{L^\infty((0,T)\times \R^D)}.
\end{align*}

We have via Lemma 2 that \[||\Pi^{(n-1)}||_{L^\infty ((0,T),BMO)}\leq C \big(||U^{(n-1)}||_{L^\infty((0,T)\times \R^D)}^2+||B^{(n-1)}||_{L^\infty((0,T)\times \R^D)}^2\big),\]where $C$ is the constant appearing in Lemma 2.
Letting $C_1$ be appropriately large we obtain the bound:
\begin{align*}|U^{(n)}|+|B^{(n)}|&\leq C_1 ( ||U_0||_{L^\infty (\R^D)}+ ||B_0||_{L^\infty (\R^D)})
\\ &+ C_1\sqrt T  \big(||U^{(n-1)}||_{L^\infty((0,T)\times \R^D)}^2 
\\ & + 2 ||U^{(n-1)}||_{L^\infty((0,T)\times \R^D)}||B^{(n-1)}||_{L^\infty((0,T)\times \R^D)}
\\ & + ||B^{(n-1)}||_{L^\infty((0,T)\times \R^D)}^2\big),
\end{align*}
which factors to give a bound independent of $n$, $x,$ and $t$:
\begin{align*}|U^{(n)}|+|B^{(n)}|&\leq C_1 ( ||U_0||_{L^\infty (\R^D)}+ ||B_0||_{L^\infty (\R^D)})
\\ &+ C_1\sqrt T  \big(||U^{(n-1)}||_{L^\infty((0,T)\times \R^D)}+||B^{(n-1)}||_{L^\infty((0,T)\times \R^D)}\big)^2.
\end{align*}
Set \[T_1= \frac 1 {4 (||U_0||_{L^\infty (\R^D)}+ ||B_0||_{L^\infty (\R^D)})^2},\]
and assume $T\leq T_1$.  Then, for $n=1$ we have
\[|U^{(1)}|+|B^{(1)}|\leq 2C_1 (||U_0||_{L^\infty (\R^D)}+ ||B_0||_{L^\infty (\R^D)}).\]
Inductively we see that
\[|U^{(n)}|+|B^{(n)}|\leq 2C_1 (||U_0||_{L^\infty (\R^D)}+ ||B_0||_{L^\infty (\R^D)}).\]  Since this bound is appropriately uniform we conclude.
\end{proof}

Existence of mild solutions is now treated.

\begin{theorem}Given $U_0,B_0\in L^\infty (\R^D)$, there exists $T_2>0$ so that, for $T<T_2$, mild solutions to MHD exist on $[0,T]$ with \[T_2=\frac 1 {(4C_1C_2(||U_0||_{L^\infty (\R^D)}+ ||B_0||_{L^\infty (\R^D)}))^2},\]  
where $C_2$ is a constant depending on $\mu$, $\nu,$ and $D$.  Furthermore, mild solutions are unique.
\end{theorem}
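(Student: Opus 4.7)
The plan is to build the mild solution as the limit of the Picard iterates $(U^{(n)},B^{(n)},\Pi^{(n)})$ already defined by the approximation scheme, and then obtain uniqueness by a Gronwall-type contraction estimate on the difference of two mild solutions. Lemma~4 already supplies a uniform $L^\infty$ bound on the iterates provided $T\le T_1$; the remaining task is to show that the iterates form a Cauchy sequence in $L^\infty([0,T)\times\R^D)$ on a possibly smaller interval.

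To this end, I would set $V^{(n)}:=U^{(n)}-U^{(n-1)}$, $W^{(n)}:=B^{(n)}-B^{(n-1)}$ and $Q^{(n)}:=\Pi^{(n)}-\Pi^{(n-1)}$, subtract the Duhamel formulas for consecutive iterates, and use the telescoping identities
\[
U^{(n)}_jU^{(n)}_k-U^{(n-1)}_jU^{(n-1)}_k = V^{(n)}_jU^{(n)}_k + U^{(n-1)}_jV^{(n)}_k,
\]
together with the analogous ones for the $B\otimes B$ and $U\otimes B$ terms. Applying Lemma~3 termwise and bounding the pressure difference via Lemma~2 (exactly as in the proof of Lemma~4), the BMO bound on $Q^{(n)}$ reduces to the sup norms of $V^{(n)}$, $W^{(n)}$ multiplied by the uniform bound $M:=2C_1(\|U_0\|_\infty+\|B_0\|_\infty)$ furnished by Lemma~4. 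Collecting terms should yield an inequality of the form
\[
\|V^{(n+1)}\|_\infty + \|W^{(n+1)}\|_\infty \le C_2\sqrt{T}\,M\,\bigl(\|V^{(n)}\|_\infty + \|W^{(n)}\|_\infty\bigr)
\]
for a constant $C_2$ depending only on $\mu$, $\nu$, and $D$. Choosing $T_2$ as in the statement guarantees $C_2\sqrt{T}\,M\le 1/2$, so the map is a strict contraction and the iterates converge uniformly to a pair $(U,B)\in C((0,T),L^\infty)$. Passing to the limit in each integral of the scheme, using dominated convergence justified by the uniform $L^\infty$ bound from Lemma~4, shows that $(U,B)$ satisfies the mild formulation with pressure $\Pi$ recovered via Lemma~1.

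For uniqueness, suppose $(U,B)$ and $(\tilde U,\tilde B)$ are two mild solutions on $[0,T]$ with the same initial data. Form the differences $V=U-\tilde U$, $W=B-\tilde B$, and repeat the same estimate on the difference of the integral equations; the bilinear structure of the nonlinearities and the pressure (handled again by Lemmas~1--3) yields, for any $t\le T$,
\[
\|V\|_{L^\infty((0,t)\times\R^D)}+\|W\|_{L^\infty((0,t)\times\R^D)}
\le C_2\sqrt{t}\,M'\bigl(\|V\|_{L^\infty((0,t)\times\R^D)}+\|W\|_{L^\infty((0,t)\times\R^D)}\bigr),
\]
where $M'$ is a uniform bound on $\|U\|_\infty,\|\tilde U\|_\infty,\|B\|_\infty,\|\tilde B\|_\infty$. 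For $t$ small enough the prefactor is strictly less than one, forcing $V\equiv W\equiv0$ on a short interval $[0,t_0]$; a standard continuation argument then propagates this equality up to $T$.

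The main technical obstacle is simply bookkeeping: four quadratic interactions ($UU$, $BB$, $UB$, and the pressure built from $UU-BB$) must each be split into a factor involving the difference and a factor involving a bounded iterate, and all of them must be bounded uniformly in $n$ via Lemma~3 in such a way that the final constant $C_2$ is independent of $n$ and absorbs the BMO estimate from Lemma~2. Once the telescoping is written out carefully, the contraction estimate and hence both existence and uniqueness follow from the same Picard mechanism used in the NSE case of~[11], adapted to accommodate the additional magnetic nonlinearities.
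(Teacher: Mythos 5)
Your proposal follows essentially the same route as the paper: telescoping the differences of consecutive Picard iterates, bounding them via Lemmas 2 and 3 together with the uniform bound of Lemma 4 to obtain a contraction for $T<T_2$, and proving uniqueness by the same small-time contraction estimate on the difference of two mild solutions followed by iteration. The only discrepancy is bookkeeping of the constant (the paper's prefactor is $4C_1$ rather than your $2C_1$, since all four iterate norms enter), which does not affect the argument.
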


\begin{proof}We assume that $U_0,B_0\in L^\infty (\R^D)$.  Our first step will be to appropriately restrict $T$ in order to establish the inequality: 
\begin{align*}||U^{(n+1)}-U^{(n)}||_{L^\infty((0,T)\times \R^D)} +||B^{(n+1)}-B^{(n)}||_{L^\infty((0,T)\times \R^D)}&\leq \alpha(||U^{(n)}-U^{(n-1)}||_{L^\infty((0,T)\times \R^D)} 
\\&+||B^{(n)}-B^{(n-1)}||_{L^\infty((0,T)\times \R^D)}),
\end{align*}for some $\alpha\in (0,1)$.

To achieve this we proceed essentially as in Lemma 4. Let $\mathcal U ^{(n+1)}=U^{(n+1)}-U^{(n)}$ and $\mathcal B^{(n+1)} = B^{(n+1)}-B^{(n)}$ and, by applying Duhamel's principle and an integration by parts, obtain formulas for $\mathcal U ^{(n+1)}$ and $\mathcal B ^{(n+1)}$.  Applying the same bounding techniques as in lemma 4 we obtain

\begin{align*}|\mathcal U ^{(n+1)}|+|\mathcal B^{(n+1)}|&\leq C\sqrt T ||\mathcal U^{(n)}||_{L^\infty((0,T)\times \R^D)}
\\&\cdot\bigg(||U^{(n)}||_{L^\infty((0,T)\times \R^D)}+||B^{(n)}||_{L^\infty((0,T)\times \R^D)}
\\&\mbox{~~~~~}+ ||U^{(n-1)}||_{L^\infty((0,T)\times \R^D)}+||B^{(n-1)}||_{L^\infty((0,T)\times \R^D)} \bigg)
\\&+ C\sqrt T ||\mathcal B^{(n)}||_{L^\infty((0,T)\times \R^D)}
\\&\cdot\bigg(||U^{(n)}||_{L^\infty((0,T)\times \R^D)}+||B^{(n)}||_{L^\infty((0,T)\times \R^D)}
\\&\mbox{~~~~~}+ ||U^{(n-1)}||_{L^\infty((0,T)\times \R^D)}+||B^{(n-1)}||_{L^\infty((0,T)\times \R^D)} \bigg)
\\&\leq C\sqrt T 4C_1 (||U_0||_{L^\infty((0,T)\times \R^D)}+||B_0||_{L^\infty((0,T)\times \R^D)})
\\&\cdot \big(||\mathcal U^{(n)}||_{L^\infty((0,T)\times \R^D)}+||\mathcal B^{(n)}||_{L^\infty((0,T)\times \R^D)}\big).
\end{align*}
Set $C_2=C$ as it appears above and \[T_2=\frac 1 {(4C_1C_2(||U_0||_{L^\infty (\R^3)}+ ||B_0||_{L^\infty (\R^3)}))^2}.\]  Assuming $0<T<T_2$, and therefore, $T=\alpha T_2$ where $\alpha\in (0,1)$, we see, 
\[||\mathcal U^{(n+1)}||_{L^\infty((0,T)\times \R^D)}+||\mathcal B^{(n+1)}||_{L^\infty((0,T)\times \R^D)}\leq \alpha \big(||\mathcal U^{(n)}||_{L^\infty((0,T)\times \R^D)}+||\mathcal B^{(n)}||_{L^\infty((0,T)\times \R^D)}\big). \]
Consequently we obtain a bound which diminishes as $n$ escapes: \[||\mathcal U^{(n)}||_{L^\infty((0,T)\times \R^D)}+||\mathcal B^{(n)}||_{L^\infty((0,T)\times \R^D)}\leq \alpha^{n-1}(||\mathcal U^{(1)}||_{L^\infty((0,T)\times \R^D)}+||\mathcal B^{(1)}||_{L^\infty((0,T)\times \R^D)}).\]

A simple convergence argument now applies.  Noting that $\alpha^{n-1}$ vanishes as $n\rightarrow \infty$, the above bound ensures that both  $\{U^{(n)}\}$ and $\{B^{(n)}\}$ are pointwise Cauchy and so we obtain pointwise limits $U$ and $B$.  The dominated convergence theorem (the dominator is the respective Gaussian kernel, scaled) then verifies that $U$ and $B$ are indeed mild solutions.  

Uniqueness follows from the fact that for two mild solutions $U$, $B$ and $\tilde U$, $\tilde B$, we have the bound \[||U-\tilde U||_\infty+||B-\tilde B||_\infty\leq (|| U-\tilde U||_\infty+||B-\tilde B||_\infty)(4C\sqrt {\tilde T} (||U||_\infty+||\tilde U||_\infty+||B||_\infty+||\tilde B||_\infty ).\]Then, taking $\tilde T$ small enough so that \[4C\sqrt {\tilde T} (||U||_\infty+||\tilde U||_\infty+||B||_\infty+||\tilde B||_\infty )<1,\]we obtain $U=\tilde U$ and $B=\tilde B$ on $[0,\tilde T]$.  This argument is iterated to obtain the conclusion on $[0,T]$.  

\end{proof}

The following corollary will be used later to minimize the geometric relevance of the magnetic field in our regularity result.

\begin{corollary}Suppose that $U$ and $B$ are the mild solutions obtained above to MHD on the time interval $[0,T)$ (so, $U_0,B_0\in L^\infty (\R^D)$ and $T<T_2$).  Then, there exists $T_3>0$ so that \[||B||_{L^\infty((0,T_3)\times \R^D)}\leq 2C_3||B_0||_{L^\infty (\R^D)},\]where $C_3$ is a constant depending on $D$ and $\mu$.
\end{corollary}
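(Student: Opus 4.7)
The plan is to exploit the key structural feature of the integral equation for $B$ in the definition of a mild solution: unlike the equation for $U$, the equation for $B$ contains no pressure contribution. Consequently, an estimate for $\|B\|_\infty$ via Lemma~3 will be linear in $\|B\|_\infty$ once $\|U\|_\infty$ is treated as a fixed quantity, which will allow the $\|B\|_\infty$ term on the right-hand side to be absorbed on the left for short times.

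Concretely, first I would invoke Theorem~5 (combined with Lemma~4), which already yields the a priori bound
\[
M := \|U\|_{L^\infty((0,T)\times\R^D)} + \|B\|_{L^\infty((0,T)\times\R^D)} \leq 2C_1(\|U_0\|_{L^\infty(\R^D)} + \|B_0\|_{L^\infty(\R^D)})
\]
on the interval $[0,T)$ where the mild solution is known to exist. In particular $\|U\|_\infty \leq M$ on this interval. Next, I would apply Lemma~3 termwise to the three summands in the mild-solution formula for $B_k$. The first term is controlled directly by $\|G_\mu(\cdot,t)\ast B_0\|_\infty \leq C_3 \|B_0\|_{L^\infty(\R^D)}$, while Lemma~3, together with the fact that bounded functions sit inside $BMO$ with norm at most twice their $L^\infty$ norm, gives for the two bilinear terms a bound of the form $C_3 \sqrt{t}\, \|U\|_\infty \|B\|_\infty$ each. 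Combining these and taking supremum over $(x,t)\in\R^D\times[0,T')$ yields, for any $T' \leq T$,
\[
\|B\|_{L^\infty((0,T')\times\R^D)} \leq C_3\|B_0\|_{L^\infty(\R^D)} + 2C_3\sqrt{T'}\, M \, \|B\|_{L^\infty((0,T')\times\R^D)}.
\]

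Finally, I would choose
\[
T_3 = \min\!\left(T,\; \frac{1}{(8C_3 C_1(\|U_0\|_{L^\infty(\R^D)} + \|B_0\|_{L^\infty(\R^D)}))^2}\right),
\]
which ensures that the coefficient $2C_3\sqrt{T_3}\,M$ is at most $1/2$. Absorbing the $B$ term on the right-hand side into the left then delivers $\|B\|_{L^\infty((0,T_3)\times\R^D)} \leq 2C_3\|B_0\|_{L^\infty(\R^D)}$, as required.

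I do not anticipate a serious obstacle here: the argument is a direct transcription of the fixed-point-style bounds already used in the proof of Lemma~4, and the only genuine input beyond those bounds is the structural observation that no pressure gradient appears in the integral equation for $B$, so no additional $\|U\|_\infty^2 + \|B\|_\infty^2$ quadratic term enters the estimate. The mildly delicate bookkeeping point is simply to keep $T_3$ no larger than the existence time $T$ from Theorem~5, which the $\min$ above handles.
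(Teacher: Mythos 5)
Your proposal is correct and follows essentially the same route as the paper: both exploit the absence of a pressure term in the $B$ equation to get an estimate linear in $\|B\|_\infty$ with coefficient $\sim\sqrt{T}\,\|U\|_\infty$, then shrink the time interval to absorb that term. The only (cosmetic) difference is that the paper runs the estimate on the iterates $B^{(n)}$ and passes to the limit, while you apply it directly to the mild solution using the a priori bound on $\|U\|_\infty$; both are valid.
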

\begin{proof}In the proof of Lemma 4 we saw the bound:
\begin{align*}|B^{(n)}|&\leq   C_{\mu}||B_0||_{L^\infty (\R^D)}
+2C_{\mu} \sqrt{T}||U^{(n-1)}||_{L^\infty((0,T)\times \R^D)}||B^{(n-1)}||_{L^\infty((0,T)\times \R^D)}.
\end{align*}Set\[ T_3=\mbox{min}\bigg\{T,\frac 1 {16C_\mu^2||U||_{L^\infty((0,T)\times \R^D)}^2}\bigg\}.\]Then, for any $(t,x)\in  [0,T_3)\times\R^D$,
\begin{align*}|B^{(n)}(t,x)|&\leq   C_{\mu}||B_0||_{L^\infty (\R^D)}
+ \frac {||U^{(n-1)}||_{L^\infty((0,T_3)\times \R^D)}} {2||U||_{L^\infty((0,T_3)\times \R^D)}}||B^{(n-1)}||_{L^\infty((0,T_3)\times \R^D)}.
\end{align*}Taking limits and setting $C_3=C_\mu$ we see,
\[||B||_{L^\infty((0,T_3)\times \R^D)}\leq C_3 ||B_0||_{L^\infty (\R^D)}+\frac 1 2 ||B||_{L^\infty((0,T_3)\times \R^D)}.\]
\end{proof}

\section{Spatial Analyticity of Mild Solutions in $L^\infty$}

In this section we adapt the arguments of [8, 9] from the case of NSE in $D$ dimensions.  Essential to later work will be a uniform bound on the analytic extensions of mild solutions on a certain complex domain.  We restrict our attention to highlighting this result while appealing to the existing literature to fill in the any technical omissions.  

Recalling the approximation scheme for the real variable discussion, let $u^{(n)}+iv^{(n)}$, $b^{(n)}+i c^{(n)}$, and $\pi^{(n)}+i\rho^{(n)}$ be the analytic extensions of $U^{(n)}$, $B^{(n)}$ and $\Pi^{(n)}$ respectively.  Real analyticity of the $n$-th approximation is a consequence of real analyticity of solutions of the heat and Poisson equations.  Substituting these extensions into the scheme and subsequently isolating real and imaginary parts yields the system:
\begin{align*}
\partial_t u^{(n)}-\nu\triangle u^{(n)}&= -(u^{(n-1)}\cdot \nabla)u^{(n-1)}+(v^{(n-1)}\cdot \nabla)v^{(n-1)}\\& +(b^{(n-1)}\cdot \nabla)b^{(n-1)}-(c^{(n-1)}\cdot \nabla)c^{(n-1)}-\nabla \pi^{(n-1)}
\\ \partial_t v^{(n)}-\nu\triangle v^{(n)} &= -(u^{(n-1)}\cdot \nabla)v^{(n-1)}-(v^{(n-1)}\cdot \nabla)u^{(n-1)}\\&+(b^{(n-1)}\cdot \nabla)c^{(n-1)}+(c^{(n-1)}\cdot \nabla)b^{(n-1)}-\nabla \rho^{(n-1)}
\\ \partial_t b^{(n)}-\mu\triangle b^{(n)}&=-(u^{(n-1)}\cdot \nabla)b^{(n-1)}+(v^{(n-1)}\cdot \nabla)c^{(n-1)}\\&+(b^{(n-1)}\cdot \nabla)u^{(n-1)}-(c^{(n-1)}\cdot \nabla)v^{(n-1)}
\\ \partial_t c^{(n)}-\mu\triangle c^{(n)}&=-(v^{(n-1)}\cdot \nabla)b^{(n-1)}-(u^{(n-1)}\cdot \nabla)c^{(n-1)}\\&(b^{(n-1)}\cdot \nabla)v^{(n-1)}+(c^{(n-1)}\cdot \nabla)u^{(n-1)}
\\ \triangle \pi^{(n)}&=-\partial_j\partial_k(u_j^{(n)}u_k^{(n)}-v_j^{(n)}v_k^{(n)})+\partial_j\partial_k(b_j^{(n)}b_k^{(n)}-c_j^{(n)}c_k^{(n)})
\\ \triangle \rho^{(n)}&= -\partial_j\partial_k (u_j^{(n)}v_k^{(n)}+u_k^{(n)}v_j^{(n)}) + \partial_j\partial_k (b_j^{(n)}c_k^{(n)}+b_k^{(n)}c_j^{(n)})
\\u^{(n)}(x,y,0)&=u_0(x,y)
\\v^{(n)}(x,y,0)&=v_0(x,y)
\\b^{(n)}(x,y,0)&=u_0(x,y)
\\c^{(n)}(x,y,0)&=c_0(x,y).
\end{align*}

Letting $y=\alpha t$ with $\alpha\in \R^n$ and $t\geq 0$ will allow us to find a sharp lower bound on the analyticity radius depending on $t$.  Substituting these terms into the approximation scheme, omitting for brevity the terms associated with the total pressure which are just solutions of Poisson equations analogous to the real variable scheme, we obtain the relationships:
\begin{align*}\partial_t u^{(n)}-\nu \triangle u^{(n)}&= \alpha_j (\partial_jv^{(n)})-(u^{(n-1)}\cdot \nabla)u^{(n-1)}+(v^{(n-1)}\cdot \nabla)v^{(n-1)}\\ &-(b^{(n-1)}\cdot \nabla)b^{(n-1)}+(c^{(n-1)}\cdot \nabla)c^{(n-1)}+\nabla \pi^{(n-1)}
\\ \partial_t v^{(n)}-\nu \triangle v^{(n)} &= -\alpha_j(\partial_j u^{(n)})-(u^{(n-1)}\cdot \nabla)v^{(n-1)}-(v^{(n-1)}\cdot \nabla)u^{(n-1)}\\&+(b^{(n-1)}\cdot \nabla)c^{(n-1)}+(c^{(n-1)}\cdot \nabla)b^{(n-1)}+\nabla \rho^{(n-1)}
\\ \partial_t b^{(n)}-\mu \triangle b^{(n)}&=-\alpha_j(\partial_j c^{(n)})-(u^{(n-1)}\cdot \nabla)b^{(n-1)}+(v^{(n-1)}\cdot \nabla)c^{(n-1)}\\&+(b^{(n-1)}\cdot \nabla)u^{(n-1)}-(c^{(n-1)}\cdot \nabla)v^{(n-1)}
\\ \partial_t c^{(n)}-\mu \triangle c^{(n)}&=-\alpha_j(\partial_j b^{(n)})-(v^{(n-1)}\cdot \nabla)b^{(n-1)}-(u^{(n-1)}\cdot \nabla)c^{(n-1)}\\&(b^{(n-1)}\cdot \nabla)v^{(n-1)}+(c^{(n-1)}\cdot \nabla)u^{(n-1)}
\\u^{(n)}(x,0,0)&=U_0(x)
\\v^{(n)}(x,0,0)&=v_0(x,0)=0
\\b^{(n)}(x,0,0)&=B_0(x)
\\c^{(n)}(x,0,0)&=c_0(x,0)=0.
\end{align*}
We now present bounds on the sum of the ${L^\infty([0,T)\times \R^n)}$ norms of $u^{(n)}$, $v^{(n)}$, $b^{(n)}$, and $c^{(n)}$ for appropriate values of $T$ (denote by $F_n$ the set $\{u^{(n)},v^{(n)},b^{(n)},c^{(n)}\}$).  Applying Duhamel's Principle and a subsequent integration by parts yields formulas for each approximate in $F_n$.  These approximates can then each be bounded using the methods seen in the proof of Lemma 4.  Combining these bounds ultimately yields
\begin{align*}\sum_{f\in F_n}||f||_{L^\infty((0,T)\times \R^D)}&\leq  C(||U_0||_{L^\infty (\R^D)}+||B_0||_{L^\infty (\R^D)})
\\& +C|\alpha|\sqrt t \bigg( \sum_{f\in F_{n}}||f||_{L^\infty((0,T)\times \R^D)}\bigg)
\\& + C\sqrt T \bigg( \sum_{f\in F_{n-1}}||f||_{L^\infty((0,T)\times \R^D)}\bigg)^2.
\end{align*}
Let $C_4$ heretofore denote twice the constant appearing above.  Provided $C_4|\alpha|\sqrt t \leq 1$ (that is, $|y|\leq \frac 1 {2C_4} \sqrt t$), we get
\begin{align*}\sum_{f\in F_n}||f||_{L^\infty((0,T)\times \R^D)}&\leq C_4(||U_0||_{L^\infty (\R^D)}+||B_0||_{L^\infty (\R^D)})
\\& + C_4\sqrt T \bigg( \sum_{f\in F_{n-1}}||f||_{L^\infty((0,T)\times \R^D)}\bigg)^2.
\end{align*}
A consequence of the construction of $F_0$ is that
\begin{align*}\sum_{f\in F_1}||f||_{L^\infty((0,T)\times \R^D)}&\leq C_4(||U_0||_{L^\infty (\R^n)}+||B_0||_{L^\infty (\R^D)}).
\end{align*}Set \[T_4= \frac 1 {16c_4^4(||U_0||_{L^\infty (\R^D)}+||B_0||_{L^\infty (\R^D)})^2}.\] If $T\leq T_4$ we have, proceeding inductively,
\begin{align*}\sum_{f\in F_n}||f||_{L^\infty((0,T_4)\times \R^D)}&\leq 2C_4(||U_0||_{L^\infty (\R^D)}+||B_0||_{L^\infty (\R^D)}).
\end{align*}
Setting $\rho(t)=\frac {\sqrt t} {2C_4}$ we conclude then that for $t\in (0,T_4)$ the sequence of analytic extensions of $U^{(n)}+B^{(n)}$ is uniformly bounded over \[D_t = \bigg\{x+iy\in \mathbb C ^D:|y|\leq\rho(t)\bigg\}.\] 

We will eventually desire an improvement of the above bound which leaves $D_t$ unchanged.  By paying a price on the size of the time interval, we can scale the bound by a factor of $\beta$ for $\beta\in (1/2,1]$.  The price is to restrict the length of the time interval to be less than $T_\beta$ where \[T_\beta= \frac {(2\beta-1)^2} {\beta^4} T_4.\]Assuming such a restriction, for $n=1$ we have
\begin{align*}\sum_{f\in F_n}||f||_{L^\infty((0,T)\times \R^D)}&\leq C_4(||U_0||_{L^\infty (\R^D)}+||B_0||_{L^\infty (\R^D)})
\\&\leq 2\beta C_4(||U_0||_{L^\infty (\R^D)}+||B_0||_{L^\infty (\R^D)})
\end{align*}
and, for subsequent $n$, induction yields
\begin{align*}\sum_{f\in F_n}||f||_{L^\infty((0,T)\times \R^D)}&\leq 2\beta C_4(||U_0||_{L^\infty (\R^D)}+||B_0||_{L^\infty (\R^D)}).
\end{align*}
We summarize the preceding discussion in the following lemma.
\begin{lemma}Given $U_0$ and $B_0$ in $L^\infty(\R^D)$, there exists a universal constant $C_4$ so that, for any $n\in \N$ and $\beta\in (1/2,1]$, for the value\[T_\beta= \frac {(2\beta-1)^2} {\beta^4}  T_4,\]we have,
\[||u^{(n)}+iv^{(n)}||_{L^\infty(\Omega)}+||b^{(n)}+ic^{(n)}||_{L^\infty(\Omega)}\leq  2\beta C_4(||U_0||_{L^\infty( \R^D)}+||B_0||_{L^\infty( \R^D)}),\]
where \[\Omega=\bigg\{(x+iy,t)\in \C^D\times (0,T):|y|\leq \frac {\sqrt t} {2C_4}\bigg\},\]and $0<T\leq T_\beta$.
\end{lemma}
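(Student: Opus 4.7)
The plan is to follow the inductive scheme already laid out in the preceding discussion: apply Duhamel's principle to each of $u^{(n)}, v^{(n)}, b^{(n)}, c^{(n)}$, integrate by parts to put the spatial derivative on the heat kernel, and estimate the resulting convolutions using Lemmas 2 and 3 exactly as in the proof of Lemma 4. The new features compared to Lemma 4 are the presence of the linear drift terms $-\alpha_j\partial_j(\cdot)$ coming from the substitution $y=\alpha t$, and the need to achieve the sharpened prefactor $2\beta C_4$ rather than a fixed constant of $2C_4$.

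First I would bound each term on the right-hand side of the Duhamel representation. Each bilinear interaction contributes, via Lemma 3, a factor of $C\sqrt{T}$ times a product of two $L^\infty$-norms over $F_{n-1}$; Lemma 2 controls $\pi^{(n-1)}$ and $\rho^{(n-1)}$ in BMO by the squares of these norms; and the drift terms contribute $C|\alpha|\sqrt{T}$ times the full sum over $F_n$. Here the restriction $|y|\leq \sqrt{t}/(2C_4)$ defining $\Omega$ is precisely the condition $C_4|\alpha|\sqrt{t}\leq 1$, and with $C_4$ chosen as twice the constant $C$ appearing in the bound it permits the drift contribution to be absorbed into the left-hand side, leaving the closed recursion
\[\sum_{f\in F_n}\|f\|_{L^\infty((0,T)\times\R^D)} \leq C_4 K + C_4\sqrt{T}\Big(\sum_{f\in F_{n-1}}\|f\|_{L^\infty((0,T)\times\R^D)}\Big)^2,\]
where $K:=\|U_0\|_{L^\infty(\R^D)}+\|B_0\|_{L^\infty(\R^D)}$.

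Next I would run the induction on $n$ with the target bound $2\beta C_4 K$. The base case $n=1$ gives $\sum_{f\in F_1}\|f\|_{L^\infty}\leq C_4 K\leq 2\beta C_4 K$ since $\beta>1/2$. Inductively, substituting the level $n-1$ bound into the recursion produces $C_4 K + 4\beta^2 C_4^3 K^2\sqrt{T}$, which is at most $2\beta C_4 K$ exactly when $4\beta^2 C_4^2 K\sqrt{T}\leq 2\beta-1$; rearranging gives $T\leq (2\beta-1)^2/(16\beta^4 C_4^4 K^2)=T_\beta$, as stated. The conclusion then follows from the triangle inequality $\|u^{(n)}+iv^{(n)}\|_{L^\infty(\Omega)}\leq \|u^{(n)}\|_{L^\infty(\Omega)}+\|v^{(n)}\|_{L^\infty(\Omega)}$ and its magnetic analogue. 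The main obstacle will be purely bookkeeping: the single constant $C_4$ must be fixed once and for all so that it simultaneously absorbs the drift and pressure contributions, matches the prefactor in the base case, and yields the precise rearrangement producing $T_\beta$; the degeneration $\beta\to 1/2$ is what forces $T_\beta\to 0$ and explains why the endpoint of the interval cannot be included.
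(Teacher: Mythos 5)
Your proposal follows essentially the same route as the paper: Duhamel plus integration by parts, the Lemma 2--3 estimates as in Lemma 4, absorption of the drift term under the constraint $C_4|\alpha|\sqrt t\leq 1$ encoded in $\Omega$, and then the induction with target $2\beta C_4(\|U_0\|_\infty+\|B_0\|_\infty)$, whose closing condition rearranges to exactly $T\leq (2\beta-1)^2\beta^{-4}T_4=T_\beta$. This matches the paper's derivation, including the identification of where the factor $(2\beta-1)^2/\beta^4$ comes from.
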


We now show that the analytic extensions of the approximations of the mild solutions converge and that their limits correspond to analytic extensions of the mild solutions themselves.  It turns out that these limits are also the unique mild solutions to the (spatially) analytic MHD equations, but we forgo a proof of this as it is methodically redundant to existing work (e.g. [9]). 

\begin{theorem}Let $U_0$ and $B_0$ be in $L^\infty(\R^D)$ and let $U$ and $B$ be the (unique) real variable solution to MHD on the time interval $[0,T^*)$ where $T^*<T_2$.  Then, for any $0<T<\mbox{min}\{T^*,T_4\}$, and for any $t\in [0,T)$, $U(\cdot,t)$ and $B(\cdot,t)$ have analytic extensions for which the domains of analyticity include $D_t$, denote these by $u+iv$ and $b+ic$.  Furthermore, for $\beta\in (1/2,1]$, there exists $T_\beta^*=\mbox{min}\{T^*,T_\beta\}$ so that the following bound holds \[||u+iv||_{L^\infty(\Omega)}+||b+ic||_{L^\infty(\Omega)}\leq 2\beta C_4(||U_0||_{L^\infty( \R^D)}+||B_0||_{L^\infty( \R^D)}),\]where \[\Omega=\bigg\{(x+iy,t)\in \C^D\times (0,T_\beta^*):|y|\leq \frac {\sqrt t} {2C_4}\bigg\}.\] 
\end{theorem}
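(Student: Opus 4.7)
The plan is to show that the sequences of analytic extensions $\{u^{(n)}+iv^{(n)}\}$ and $\{b^{(n)}+ic^{(n)}\}$ produced by the complex-extended scheme converge uniformly on $\Omega$ to limits which agree with $U$ and $B$ on the real slice $y=0$, and then to read off the bound by passing to the limit in Lemma 6.

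First I would reprise the contraction argument of Theorem 5, transposed into the complex-extended setting. Setting $\mathcal{U}^{(n+1)} := (u^{(n+1)}+iv^{(n+1)})-(u^{(n)}+iv^{(n)})$ and $\mathcal{B}^{(n+1)} := (b^{(n+1)}+ic^{(n+1)})-(b^{(n)}+ic^{(n)})$, Duhamel's principle together with an integration by parts yields integral formulas for each difference; each quadratic nonlinearity telescopes into a sum of products in which one factor is a previous difference and the other an approximate from $F_n$ or $F_{n-1}$, while the linear drift terms $\alpha_j\partial_j$ are absorbed via the very restriction $C_4|\alpha|\sqrt{t}\leq 1$ that defines $\Omega$, exactly as in the derivation preceding Lemma 6. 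Applying Lemma 3 together with the uniform bound of Lemma 6 then produces an estimate of the shape
\begin{align*}
\|\mathcal{U}^{(n+1)}\|_{L^\infty(\Omega)}+\|\mathcal{B}^{(n+1)}\|_{L^\infty(\Omega)}
&\leq C\sqrt{T}\cdot 4\beta C_4\bigl(\|U_0\|_{L^\infty(\R^D)}+\|B_0\|_{L^\infty(\R^D)}\bigr) \\
&\quad \times \bigl(\|\mathcal{U}^{(n)}\|_{L^\infty(\Omega)}+\|\mathcal{B}^{(n)}\|_{L^\infty(\Omega)}\bigr),
\end{align*}
which, after possibly shrinking $T$ further to secure a contraction factor $\alpha\in(0,1)$ and then iterating the argument forward in time to cover $[0,T_\beta^*]$ (mirroring the final iteration step at the end of Theorem 5), makes the sequence of differences geometrically summable in $L^\infty(\Omega)$.

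Next I would identify the limits with analytic extensions of $U$ and $B$. Uniform convergence on $\Omega$ of functions analytic in $x+iy$ for each fixed $t$ yields, by the standard Morera/Weierstrass argument, that the limits $u+iv$ and $b+ic$ are themselves analytic in $x+iy$. Restricting to $y=0$ reduces the complex scheme to the real scheme, so $u(x,0,t)=\lim U^{(n)}(x,t)=U(x,t)$ and $b(x,0,t)=B(x,t)$ by Theorem 5; uniqueness of analytic continuation then identifies $u+iv$ and $b+ic$ as the advertised extensions, and their domains of analyticity contain $D_t$ for every $t\in(0,T_\beta^*)$. The stated bound on $\Omega$ is inherited by letting $n\to\infty$ in the approximate bound of Lemma 6.

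The main bookkeeping obstacle is the interplay of the three time scales—the real-existence interval $[0,T^*)$ from Theorem 5, the analyticity scale $T_4$ in the complex-extended scheme, and the $\beta$-refinement $T_\beta$—together with the possibly still smaller time on which the above contraction closes in one shot. Defining $T_\beta^*:=\min\{T^*,T_\beta\}$ and iterating the local contraction forward in time, exactly as was done in Theorem 5 to push uniqueness from $[0,\tilde T]$ to $[0,T]$, is what yields the full conclusion on $[0,T_\beta^*)$; conceptually everything else is a direct transcription of Lemma 4 and Theorem 5 through the complex-extended system written out just before Lemma 6.
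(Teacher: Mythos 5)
Your route is genuinely different from the paper's and is considerably heavier than it needs to be. The paper runs no contraction argument in the complex domain at all: it observes that, for each fixed $t$, the complexified approximates form a sequence of functions analytic on $D_t$ that is uniformly bounded there by Lemma 7, and that this sequence already converges on the real slice $\R^D$ (a set containing accumulation points) because on $y=0$ the complex scheme reduces to the real scheme, whose convergence is Theorem 5. Vitali's convergence theorem then yields an analytic limit on $D_t$, which agrees with $U$ and $B$ on $\R^D$ and hence extends them, and the bound on $\Omega$ is inherited from Lemma 7 in the limit. Your plan --- show the complex approximates are Cauchy in $L^\infty(\Omega)$ by a contraction and invoke Weierstrass/Morera --- buys an explicit convergence rate, but it costs a full re-derivation of the difference estimates for the complexified system (drift terms $\alpha_j\partial_j$ included), and that is where your one questionable step lives.

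The questionable step is ``iterating the argument forward in time to cover $[0,T_\beta^*]$.'' In Theorem 5 that iteration is harmless because the object being propagated lives on $\R^D$. Here it is not: if the contraction only closes on $(0,\tilde T)$ and you restart the scheme at $\tilde T$ with data $U(\tilde T)$, $B(\tilde T)$, the restarted complexified scheme delivers analyticity and convergence only on the strip $|y|\leq \sqrt{t-\tilde T}/(2C_4)$, which for $t>\tilde T$ is strictly smaller than the slice $|y|\leq \sqrt t/(2C_4)$ of $\Omega$ you need to cover; moreover the restarted approximates are different functions from $u^{(n)}+iv^{(n)}$, so their convergence says nothing directly about convergence of the original sequence on $\Omega$. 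You can repair this either by checking that the constants let the contraction close in one shot on all of $(0,T_\beta^*)$ (enlarging $C_4$ if necessary, as the paper does elsewhere), or --- more cleanly --- by dropping the contraction entirely and using Vitali as the paper does, since both inputs it requires (the uniform bound of Lemma 7 on $\Omega$ and convergence on the real slice) are already in hand.
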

\begin{proof}Because at each $t$ the approximating functions in the analytic scheme converge on a set containing an accumulation point, namely $\R^D$, Vitali's theorem grants that they converge to analytic functions $u+iv$ and $b+ic$.  Then, since these agree with $U$ and $B$ on $\R^D$, they constitute analytic extensions of $U$ and $B$.  The bound follows immediately from the bounds on the approximations.
\end{proof}


\section{Geometric-Measure Type Regularity Criteria}

We begin by defining the geometric measure criteria alluded to in the introduction.

\begin{definition}Let $x_0$ be a point in $\R^3$, $r>0$, $S$ an open subset of $\R^3$ and $\delta\in (0,1)$.

The set $S$ is {\em linearly $\delta$-sparse around $x_0$ at scale $r$ } if there exists a unit vector $d$ in $S^2$ such that \[\frac {|S \cap (x_0-rd,x_0+rd)|} {2r}\leq \delta.\]
\end{definition}
We will be interested in sparseness of super-level sets.  For a function $f(x,t)$, a time $t$, and a threshold $M$, a super-level set is defined to be \[\Omega_f(t,M)=\{x\in \R^D:|f(x,t)|>M\}.\]

There is a significant amount of freedom in choosing how to relate various parameters and achieve the desired regularity outcome.  We begin with a very simple case, Theorem 11, which most closely mirrors [7].  Here, the sparseness is imposed singly on the intersection of super-level sets of $U$ and $B$.  Consequently, the (local) direction in which $U$ and $B$ are sparse must agree.  This is reasonable in the context of the above discussion regarding MHD turbulence.  It is, however, formally restrictive and subsequent results are presented to reveal where added subtlety can be achieved.  The proof of this result will illustrate Gruji\'c's argument and expedite discussion of later results.

The remaining two theorems achieve the same result as Theorem 11 but under relaxed assumptions.    Theorem 12 assumes sparseness on each field but with greater independence than in Theorem 11.  A technical parameter specifies a relationship between the thresholds of the superlevel sets for $U$ and $B$ on which these sparseness assumptions are made.  Both the direction of sparseness and the scale are, however, independent. Theorem 13 exploits the linearity of the magnetic field in the magnetic field equation in order to eliminate this sparseness condition.  A cost is here paid by demanding the sup norm of the magnetic field is suitably bounded (in a fashion dependent on how much we improved the uniform bound) by a scaling of the sup norm of a single velocity profile.  

The only non-classical result from the theory of harmonic measures which is of interest to us is due to Solynin [17].  It is included for convenience.
\begin{theorem}{(\em Solynin [17])}.  Let $K$ be a closed subset of $[-1,1]$ such that $|K|=2\gamma$ for some $0<\gamma <1$.  Suppose further that $0\in \mathbb D\setminus K$.  Then,
\[\omega(0,\mathbb D, K)\geq\omega(0,\mathbb D, K_\gamma)=\frac 2 \pi \arcsin \frac {1-(1-\gamma)^2} {1+(1-\gamma)^2}\]
where $K_\gamma=[-1,-1+\gamma]\cup[1-\gamma,1]$.
\end{theorem}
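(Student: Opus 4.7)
The plan is to prove this extremal inequality via a two-step route: a polarization/rearrangement argument that identifies $K_\gamma$ as the minimizer, followed by an explicit conformal computation of its harmonic measure. Write $h_K(z)=\omega(z,\mathbb D\setminus K,K)$ for the bounded harmonic function on $\mathbb D\setminus K$ taking boundary values $1$ on both sides of the slit $K$ and $0$ on $\partial\mathbb D$; the task is to minimize $h_K(0)$ over closed $K\subset[-1,1]$ with $|K|=2\gamma$. By continuity of harmonic measure under Hausdorff approximation, it suffices to treat the case where $K$ is a finite disjoint union of closed subintervals of $[-1,1]$.

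Next, the rearrangement step. The aim is to show that redistributing components of $K$ toward the endpoints $\pm 1$, keeping total length fixed, can only decrease $h_K(0)$. I would establish this through a polarization argument with respect to vertical chords of $\mathbb D$: fixing a vertical line $L_c=\{x=c\}$, one reflects the portion of $K$ in $[-1,c]$ or $[c,1]$ across $L_c$ in a prescribed way. A classical polarization lemma for harmonic measure (see, e.g., Baernstein and Dubinin) ensures that this operation is non-increasing for $h_K(0)$ whenever $0$ lies on the appropriate side of $L_c$. Iterating over an appropriate family of such axes drives any admissible $K$ to the symmetric endpoint configuration $K_\gamma$.

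Finally, I would compute $h_{K_\gamma}(0)$ explicitly. By evenness of $K_\gamma$ under $z\mapsto -z$, the problem reduces to a harmonic-measure computation on the half-disk $\mathbb D^+=\mathbb D\cap\{\mathrm{Im}\, z>0\}$. An appropriate conformal map (for example a Schwarz-Christoffel transformation, or a suitable Joukowski-type composition) sends the slit half-disk to an upper half-plane in which the boundary decomposes into arcs of constant value, and in this geometry the harmonic measure at the image of $0$ is given by a standard arcsine integral. Simplification produces exactly $\frac{2}{\pi}\arcsin\frac{1-(1-\gamma)^2}{1+(1-\gamma)^2}$.

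The main obstacle will be the rearrangement step. One must choose polarization axes that respect the ambient symmetry of $\mathbb D$ and fix the test point $0$ while still driving an arbitrary slit configuration to $K_\gamma$; a naive single-direction Steiner symmetrization does not suffice, and it is precisely for this reason that Solynin's original proof in [17] proceeds via his dissymmetrization machinery rather than classical symmetrization. Once that reduction is in place, the conformal computation is essentially routine.
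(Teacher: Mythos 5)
The first thing to note is that the paper does not prove this statement at all: Theorem 10 is imported verbatim from Solynin [17] and used as a black box, so there is no in-paper argument to compare yours against. Judged on its own terms, your proposal has a genuine gap at its central step. The entire content of the theorem is the extremality of the two-sided configuration $K_\gamma$, and your rearrangement step does not establish it. Polarization with respect to a vertical chord, in its classical form, moves the set monotonically toward one side of the axis, and iterating such moves drives an arbitrary finite union of intervals to a single interval abutting one endpoint, e.g.\ $[1-2\gamma,1]$ --- not to the symmetric configuration $K_\gamma$, which has strictly smaller harmonic measure at $0$ (its nearest point to the origin lies at distance $1-\gamma$ rather than $1-2\gamma$). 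Producing the balanced two-ended minimizer from an arbitrary configuration requires comparing sets that are not related by any single reflection, which is precisely why Solynin's proof proceeds via his dissymmetrization / ordering-of-sets machinery rather than classical polarization or Steiner symmetrization. You concede exactly this in your final paragraph; but that concession is an admission that the main step of the proof is absent, not a remark about exposition. As written, the argument yields at most the classical one-sided Beurling ``shove'' estimate, which is a different (and weaker, for this geometry) statement.

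Two smaller points. First, the reduction to finite unions of intervals is also oriented the wrong way as stated: harmonic measure is monotone increasing in $K$, so to prove a \emph{lower} bound for a general closed $K$ --- which may be a fat Cantor set containing no interval --- you cannot approximate from inside by finite unions of intervals. You would instead need a decreasing outer approximation $K_n \downarrow K$ by finite unions of closed intervals with $|K_n|\downarrow |K|$, the continuity $\omega(0,\mathbb D, K_n)\to\omega(0,\mathbb D,K)$ along such decreasing sequences, and the monotonicity of the right-hand side $\frac 2\pi \arcsin\frac{1-(1-\gamma)^2}{1+(1-\gamma)^2}$ in $\gamma$ to close the loop. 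Second, the explicit evaluation of $\omega(0,\mathbb D,K_\gamma)$ by symmetry and a Joukowski-type conformal map is indeed routine, and that part of the plan is fine; it is the extremal step that carries all the difficulty.
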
Other necessary results can be found in [1, 13] and are also listed in [7].

Note that in what follows $T_4=T_4(t_0)$ and $C_4=C_4(t_0)$ are determined in the context of Theorem 8 with initial data $U(t_0)$ and $B(t_0)$ for some time $t_0$.  These mild solutions are, up to a shift in the time variable, just the restrictions of the original solutions to the time interval $[t_0,t_0+T_4)$.  Also, we assume $C_4\geq 1$ and observe that, if this is not the case, we can re-determine constants and time interval lengths to reflect the constant $\max\{1,C_4\}$.

\begin{theorem} Suppose $U_0,B_0\in L^\infty$ and consider the corresponding mild solution comprised of $U$ and $B$ defined on an interval of regularity $[0,T)$.  Let $\delta\in (0,1)$, $h=h(\delta)=\frac 2 \pi \arcsin \frac {1-\delta^2} {1+\delta^2}$, $\alpha=\alpha(\delta)\geq \frac {1-h} {h}$ satisfying $\frac 1 2 \geq \frac 1 {2^{1/h}(2C_4)^\alpha}$.  
Assume there exists $\epsilon>0$ so that for any $t_0\in (T-\epsilon,T)$, either 
\begin{enumerate}
\item $t_0+T_4> T$, or,
\item there exists a time $t=t(t_0)\in [t_0+T_4/4,t_0+T_4]$  so that, for any $x_0\in \R^3$, with \[M=\frac 1 {2^{1/h}(2C_4)^\alpha}(||U(t_0)||_\infty +||B(t_0)||_\infty)\]
and\[S=\Omega_{U}(M,t)\cap \Omega_{B}(M,t),\]
there exists $r$ with $0<r<\rho(T_4/4)$ such that $S$ is linearly $\delta$-sparse around $x_0$ at scale $r$ in some direction $d$.
\end{enumerate}
Then, $T$ is not a singular time.
\end{theorem}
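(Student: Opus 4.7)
The plan is to show that whenever alternative (2) holds at a time $t_0 \in (T-\epsilon, T)$, the quantity $n(s) := \|U(\cdot,s)\|_\infty + \|B(\cdot,s)\|_\infty$ satisfies $n(t(t_0)) \leq n(t_0)$. Iterating this non-growth bound forward forces alternative (1) to trigger after finitely many steps, and at that point Theorem 5 produces a mild solution extending past $T$, so $T$ is not singular.

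For the non-growth bound, fix $t_0$ in case (2) and apply Theorem 8 with $\beta = 1$ to the restarted data $U(t_0), B(t_0)$: the analytic extensions $u+iv$ and $b+ic$ exist on the tube $\{(x+iy,s) \in \C^3 \times (t_0, t_0 + T_4) : |y| \leq \sqrt{s-t_0}/(2C_4)\}$, with $\|u+iv\|_\infty + \|b+ic\|_\infty \leq 2 C_4 n(t_0)$. Fix $x_0 \in \R^3$; because $r < \rho(T_4/4) \leq \sqrt{t-t_0}/(2C_4)$, the complex disk $z \mapsto x_0 + rzd$, $z \in \mathbb{D}$, lies in the time-$t$ fibre of the tube. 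Let $K = \{y \in [-1,1] : x_0 + ryd \notin S\}$, so $|K| \geq 2(1-\delta)$ and the definition of $S$ gives $|U(x_0+ryd,t)|, |B(x_0+ryd,t)| \leq M$ on $K$. For each unit vector $\eta \in S^2$, the scalar
\[
F_\eta(z) = \eta \cdot (u+iv)(x_0 + rzd, t)
\]
is analytic on $\mathbb{D}$, satisfies $|F_\eta| \leq 2C_4 n(t_0)$ there by Cauchy--Schwarz, and satisfies $|F_\eta| \leq M$ on $K$. Solynin's theorem (Theorem 10) with $\gamma = 1-\delta$ gives $\omega(0, \mathbb{D} \setminus K, K) \geq h$, so by the two-constants theorem
\[
|F_\eta(0)| \leq (2C_4 n(t_0))^{1-h} M^h = \frac{(2C_4)^{1-h-\alpha h}}{2}\, n(t_0) \leq \frac{n(t_0)}{2},
\]
where the last inequality uses $\alpha h \geq 1-h$ and $2C_4 \geq 1$. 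Optimizing over $\eta$ yields $|U(x_0,t)| \leq n(t_0)/2$, and the analogous argument using $b+ic$ gives $|B(x_0,t)| \leq n(t_0)/2$. As $x_0$ was arbitrary, $n(t) \leq n(t_0)$.

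Finally, iterate. Starting at any $t_0 \in (T-\epsilon, T)$: if alternative (1) holds, then $t_0 + T_4(t_0) > T$ and Theorem 5 at $t_0$ (whose existence time is comparable to $T_4(t_0)$ after matching the universal constants) produces a mild solution extending past $T$. If (2) holds, set $t_1 = t(t_0)$; then $t_1 - t_0 \geq T_4(t_0)/4$ and $n(t_1) \leq n(t_0)$, so the dichotomy may be reapplied at $t_1$ with $n$ still bounded by $n(t_0)$. Because $T_4$ is monotone decreasing in $n$, each case-(2) step advances by the fixed positive increment $T_4(n(t_0))/4$, so after finitely many iterations alternative (1) must trigger and the extension follows. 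The main obstacle is the argument of the preceding paragraph --- reducing a three-dimensional sparseness hypothesis to a one-dimensional harmonic-measure problem on $\mathbb{D}$ via the scalar projection $F_\eta$, and tuning the threshold $M$ so that the two-constants estimate collapses to exactly $n(t_0)/2$; the condition $\alpha \geq (1-h)/h$, paired with $2C_4 \geq 1$, is engineered precisely to make this collapse happen.
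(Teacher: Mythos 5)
Your proposal is correct and takes essentially the same route as the paper: restart the solution at $t_0$, invoke the uniform analyticity bound of Theorem 8 on the complex disk through $x_0$ in the sparse direction, combine Solynin's estimate with the two-constants theorem so that the choice of $M$ collapses the bound to $\tfrac 1 2 (||U(t_0)||_\infty+||B(t_0)||_\infty)$ for each field, and iterate until case (1) triggers. The only cosmetic differences are that you obtain a scalar analytic function via the projections $F_\eta$ whereas the paper rotates and translates coordinates with a map $Q$, and that the paper handles the degenerate case $0\in K$ (i.e.\ $x_0\notin S$) as a separate one-line step.
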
 

It will be clear from the following proof that (2) needs only hold at finitely many times in $(T-\epsilon,T)$ provided these are suitably spaced.

\begin{proof}In the case of (1) we are done as the solutions with initial data take at $t_0$ are uniformly bounded on the interval $(t_0,t_0+T_4)$ which contains $T$.

In the case of (2) we apply an iterative argument which ultimately reduces to case (1).  Our main task is to establish that for any $t_0$ there exists a time $t(t_0)$ so that, for all $x_0\in\R^3$,
\[|U(x_0,t)|+|B(x_0,t)|\leq A\]
where $A=||U(t_0)||_\infty+||B(t_0)||_\infty$.  Consequently, the procedure can be repeated with $t$ replacing $t_0$ and, as each iteration moves the initial time closer to $T$ by a non-vanishing length, case (1) will eventually be achieved.  

To begin, let $t_0\in (T-\epsilon,T)$ and $x_0\in \R^3$ be fixed.  Let $t=t(t_0)$ be as in the theorem and, therefore, by the sparseness assumption, there exists a length $r<\rho(T_4/4)$ and a direction vector $d$ so that 
\[\frac {S\cap(x_0-rd,x_0+rd)|} {2r}\leq \delta.\]
Observing the the MHD system is rotationally and translationally invariant, let $Q$ denote the transformation (rotation and translation) taking $x_0$ to $0$ and directing $d$ to be parallel to the first coordinate vector, $e_1$.  Let $U_{x_0,Q}$ and $B_{x_0,Q}$ comprise a mild solution to the transformed MHD with initial data taken at $t_0$.

By Theorem 8, on $(0,T_4 )$, $U_{x_0,Q}$ and $B_{x_0,Q}$ have analytic extensions satisfying the uniform bound \[|u+iv|+|b+ic|\leq 2C_4A,\]where the bounded terms are the appropriate analytic extensions.  Focussing on the extension of the first spatial coordinate axis, hereafter called just the real axis, we see the domains of analyticity contain the disk centred at $0$ of radius $r$, $D_r$ because $r<\rho (T_4/4)\leq \rho(t)$.

Let $K$ be the complement in $[-r,r]$ of $Q$ applied to $S\cap (x_0-rd,x_0+rd)$.  If $0\in K$ then \[|U_{x_0,Q}(0,t)|\leq M\leq \frac 1 2 A,\] and, as the same bound holds on $|B_{x_0,Q}(0,t)|$, we are done.  If $0\notin K$ we turn to the theory of harmonic measures.  

To apply the harmonic measure maximum principle (c.f. [Ahl] pp.39) observe that, uniformly in $D_r$ we have,
\[|u+iv|\leq 2C_4 A,\]
while uniformly in $K$ we have,
\[|u+iv|\leq M,\] and, consequently,
\[|U_{x_0,Q}(0,t)|\leq \bigg(\frac A {2^{1/h}(2C_4)^\alpha}\bigg)^{\omega(0,D_{r},K)}\bigg(2 C_4A\bigg)^{1-\omega(0,D_{r},K)}.\]

The sparseness assumption entails that $|K|\geq 2r(1-\delta)$.  Letting $\frac 1 {r}K=\{z\in \C:rz\in K\}$, we obtain $|\frac 1 r K|\geq 2(1-\delta)$.  Applying Theorem 10 with $\gamma=1-\delta$ to a subset $K'\subset K$ where $|K'|=2(1-\delta)$ yields (noting harmonic measure increasing in $K$), \[\omega(0,D_1,\frac 1 r K)\geq \omega(0,D_1,\frac 1 r K')\geq \omega(0,D_1,K_{\gamma})=\frac 2 \pi \arcsin \frac {1-(\delta)^2} {1+(\delta)^2}=h.\]
Since harmonic measure is invariant under conformal mappings, it is invariant under the mapping $z\mapsto r z$.  The previous inequality then implies
\[\omega(0,D_{r},K)\geq h.\]
Combining our bounds (noting $C_4>1$ and $M\leq 2C_4A$) we see
\begin{align*}|U_{x_0,Q}(0,t)|&\leq \bigg(\frac A {2^{1/h}(2C_4)^\alpha}\bigg)^{h}
\bigg(2 C_4A\bigg)^{1-h}\leq \frac 1 2A. 
\end{align*}
And, undoing our transformation $Q$, \[|U(x_0,t)|\leq \frac 1 2 A.\]
Proceeding identically results in the same bound for $|B(x_0,t)|$ which gives the conclusion for $x_0$:\[|U(x_0,t)|+|B(x_0,t)|\leq ||U(t_0)||_\infty+||B(t_0)||_\infty.\]As our selection of $x_0$ was arbitrary this holds uniformly and the iterative argument outlined at the onset of the proof allows us to conclude that $T$ is not a singular time.
\end{proof}


\begin{theorem}  Suppose $U_0,B_0\in L^\infty$ and consider the corresponding mild solution comprised of $U$ and $B$ defined on an interval of regularity $[0,T)$.  Let $\delta\in (0,1)$, $h=h(\delta)=\frac 2 \pi \arcsin \frac {1-\delta^2} {1+\delta^2}$, $\alpha=\alpha(\delta)\geq \frac {1-h} {h}$, and $\gamma\in (0,1)$.

Assume there exists $\epsilon>0$ so that for any $t_0\in (T-\epsilon,T)$, either 
\begin{enumerate}
\item $t_0+T_4> T$, or,
\item there exists a time $t=t(t_0)\in [t_0+T_4/4,t_0+T_4]$  so that, for any $x_0\in \R^3$, there exist $r_U$ and $r_B$ so that following sparseness conditions are met: 
\begin{itemize}\item $\Omega_{U}(t,M_U)$ is linearly $\delta$-sparse around $x_0$ at scale $r_U$ where $0<r_U \leq \rho(T_4/4)$ and $M_U=\frac {\gamma} {(2C_4)^\alpha}(||U(t_0)||_\infty+||B(t_0)||_\infty)$, and,
\item $\Omega_{B}(t,M_B)$ is linearly $\delta$-sparse around $x_0$ at scale $r_B$ where $0<r_B \leq \rho(T_4/4)$ and $M_B=\frac {(1-\gamma^h)^{1/h}} {(2C_4)^\alpha}(||U(t_0)||_\infty+||B(t_0)||_\infty)$.
\end{itemize}
\end{enumerate}
Then, $T$ is not a singular time.
\end{theorem}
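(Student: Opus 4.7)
The plan is to adapt the iterative scheme from Theorem 11, with the key modification that the harmonic measure maximum principle is applied to $U$ and $B$ separately, each in its own rotated frame. Case (1) is handled identically to Theorem 11, since the uniform bound from Theorem 8 already extends the solution past $T$. For case (2), the iteration will close once we establish that at the indicated time $t=t(t_0)\in[t_0+T_4/4,t_0+T_4]$ we have
\[
|U(x_0,t)|+|B(x_0,t)|\le A:=\|U(t_0)\|_{\infty}+\|B(t_0)\|_{\infty}\quad\text{for every }x_0\in\R^3,
\]
since then $T_4=T_4(A)$ does not shrink and each application of (2) advances time by at least $T_4/4$, so finitely many iterations force case (1).

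To achieve this for fixed $x_0$, I would split the target into the two separate claims $|U(x_0,t)|\le\gamma^{h}A$ and $|B(x_0,t)|\le(1-\gamma^{h})A$, which sum to exactly $A$. For the $U$-bound, introduce a rigid motion $Q_U$ sending $x_0$ to the origin and the sparseness direction $d_U$ onto $e_1$. The transformed pair $(U_{x_0,Q_U},B_{x_0,Q_U})$ is a mild solution with the same $L^\infty$ datum, so Theorem 8 provides analytic extensions bounded by $2C_4A$ on the complex disk $D_{r_U}$ about $0$ on the real axis (using $r_U\le\rho(T_4/4)\le\rho(t)$). On the real set
\[
K_U:=[-r_U,r_U]\setminus Q_U\bigl(\Omega_U(t,M_U)\cap(x_0-r_Ud_U,x_0+r_Ud_U)\bigr)
\]
the extension is dominated by $M_U$, and $|K_U|\ge 2r_U(1-\delta)$. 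If $0\in K_U$ the trivial bound $|U(x_0,t)|\le M_U\le\gamma A\le\gamma^{h}A$ suffices (using $\gamma<1$ and $h<1$). Otherwise, Solynin's theorem applied to $\tfrac{1}{r_U}K_U$ yields $\omega(0,D_{r_U},K_U)\ge h$, whence the two-constant theorem gives
\[
|U(x_0,t)|\le(2C_4A)^{1-h}M_U^{h}=\gamma^{h}A\,(2C_4)^{1-h-\alpha h}\le\gamma^{h}A,
\]
the last inequality coming from $\alpha h\ge 1-h$ and $C_4\ge 1$. The bound for $B$ is obtained identically but using a separate rigid motion $Q_B$ aligning $d_B$ with $e_1$ and the scale $r_B$: the same analyticity estimate from Theorem 8 supplies the ceiling $2C_4A$, and with threshold $M_B=(1-\gamma^{h})^{1/h}/(2C_4)^{\alpha}\cdot A$ the harmonic measure computation produces $|B(x_0,t)|\le(1-\gamma^{h})A$. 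Summing yields the desired pointwise bound.

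The only new wrinkle compared to Theorem 11 is that $Q_U$ and $Q_B$ need not coincide, so Theorem 8 must be invoked twice on the same underlying solution, once in each rotated frame; this is legitimate because the constants $C_4$ and $T_4$ depend only on the rotation-invariant $L^\infty$ datum, and the domain $\Omega$ in Theorem 8 is itself rotationally symmetric about the origin. The crucial arithmetic identity is $\gamma^{h}+(1-\gamma^{h})=1$, which replaces the $\tfrac12+\tfrac12=1$ split used in Theorem 11 and allows a continuous trade-off between the sparseness thresholds imposed on the two fields; the only (minor) obstacle is to verify that the independence of the pairs $(d_U,r_U)$ and $(d_B,r_B)$ causes no interference in the two separate applications of Vitali/Solynin, which it does not since each application is self-contained on its own complex disk.
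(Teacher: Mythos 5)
Your proposal is correct and follows essentially the same route as the paper: the same iteration on $A=\|U(t_0)\|_\infty+\|B(t_0)\|_\infty$, the same split of the target bound into $\gamma^hA$ for $U$ and $(1-\gamma^h)A$ for $B$, the same case distinction on whether $x_0$ lies in the relevant super-level set, and the same Solynin/two-constant estimate applied to each field in its own rotated frame. The only differences are cosmetic (you replace the harmonic-measure exponent $\omega$ by $h$ via monotonicity before simplifying, where the paper estimates $\gamma^{\omega}\le\gamma^h$ directly, and you make explicit the harmless independence of $Q_U$ and $Q_B$ that the paper dismisses as ``identical up to labelling'').
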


\begin{proof}The same iterative argument seen in the proof of Theorem 11 is applied.  To ensure it holds we obtain for all $x_0$ the bounds
\begin{align*}|U(x_0,t)|&\leq \gamma^h(||U(t_0)||_\infty +||B(t_0)||_\infty)\mbox{, and,}
\\|B(x_0,t)|&\leq (1-\gamma^h)(||U(t_0)||_\infty +||B(t_0)||_\infty).\end{align*}
 
This is shown by cases depending on the inclusion of $x_0$ in the relevant super-level sets.  In the case $x_0\in \Omega_{U}(t,M_U)$, we have, by an identical argument to that in the previous proof, that
\[|U_{x_0,Q}(0,t)|\leq \big(\frac \gamma {(2C_4)^\alpha}(||U(t_0)||_\infty+||B(t_0)||_\infty)\big)^{\omega(0,D_{r_U},K)}\big(2 C_4(||U(t_0)||_\infty +||B(t_0)||_\infty )\big)^{1-\omega(0,D_{r_U},K)}.\]
Since $1\geq\omega(0,D_{r_U},K_U)>h>0$ and $\gamma\in (0,1)$, $\gamma^h\geq \gamma^{\omega(0,D_{r_U},K_U)}$.  The desired bound then follows.  The case for $x_0\in \Omega_{B}(t,M_B)$ is identical up to labelling.  

In the case that $x_0\notin \Omega_{U}(t,M_U)$, we have \[|U(x_0,t)|\leq \frac {\gamma} {(2C_4)^\alpha} (||U(t_0)||_\infty +||B(t_0)||_\infty).\]Since $h,\gamma \in (0,1)$, $\alpha>0$, and $2C_4> 1$, we clearly have \[\frac {\gamma} {(2C_4)^\alpha} \leq \gamma^h,\]which yields the desired bound.

In the case that $x_0\notin \Omega_{B}(t,M_B)$, we similarly have \[|B(x_0,t)|\leq \frac {(1-\gamma^h)^{1/h}} {(2C_4)^\alpha} (||U(t_0)||_\infty +||B(t_0)||_\infty),\]and, observing that \[\frac {(1-\gamma^h)^{1/h}} {(2C_4)^\alpha} \leq (1-\gamma^h),\] we conclude.

So, the initially stated bounds are valid and, iterating, we conclude $T$ is not a singular time.
\end{proof}


\begin{theorem}  Suppose $U_0,B_0\in L^\infty$ and consider the corresponding mild solution comprised of $U$ and $B$ defined on an interval of regularity $[0,T)$.  Let $\delta\in (0,1)$, $h=h(\delta)=\frac 2 \pi \arcsin \frac {1-\delta^2} {1+\delta^2}$, $\alpha=\alpha(\delta)\geq \frac {1-h} {h}$.   Additionally, let $\beta\in (1/2,1)$ satisfy $\beta^{1-h}\geq 1/(2C_4)^\alpha$.

Assume there exists $\epsilon>0$ and a collection of times $t_0,t_1,\ldots,t_k\in (T-\epsilon,T)$, so that 
\begin{enumerate}
\item $t_k+T_4> T$,
\item $t_{i+1}\in [t_i+T_\beta(t_i)/4,t_i+T_\beta(t_i)]$, and,
\item the following two criteria are met:
\begin{itemize}\item for each $i$ and for any $x_0$ there exists $r$ so that $\Omega_{U}(t_{i+1},M)$ is linearly $\delta$-sparse around $x_0$ at scale $r$ where $0<r \leq \rho(T_\beta/4)$ and $M=\frac 1 {(2C_4)^\alpha}(||U(t_i)||_\infty+||B(t_i)||_\infty)$, and,
\item $2C_3 ||B(t_0)||_\infty \leq (1-\beta^{1-h})(||U(t_0)||_\infty+||B(t_0)||_\infty)$.
\end{itemize}
\end{enumerate}
Then, $T$ is not a singular time.

\end{theorem}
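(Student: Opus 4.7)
The plan is to adapt the iterative scheme of Theorem 11 to the present setting, where sparseness is imposed only on the velocity field and the magnetic field is instead controlled via the linear structure of its equation through Corollary 6. The goal is to show by induction on $i$ that $A_i := ||U(t_i)||_\infty + ||B(t_i)||_\infty \leq A_0$ for each $0 \leq i \leq k$. Once this is established, condition (1) applied at $t_k$, where the initial data has size at most $A_0$ (so that $T_4(t_k) \geq T_4(t_0)$), guarantees the solution extends past $T$.

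For the velocity bound at step $i+1$: fix $x_0 \in \R^3$ and use the translation and rotation invariance of MHD, exactly as in the proof of Theorem 11, to obtain transformed solutions $U_{x_0,Q}$ and $B_{x_0,Q}$. Theorem 8 applied at time $t_i$ with parameter $\beta$ bounds the corresponding analytic extension uniformly by $2\beta C_4 A_i$, and since $r \leq \rho(T_\beta/4) \leq \rho(t_{i+1}-t_i)$ the real-axis disk $D_r$ lies inside the analyticity domain. The sparseness hypothesis (3)(a) provides a set $K \subset [-r,r]$ of measure at least $2r(1-\delta)$ on which $|U_{x_0,Q}| \leq M = A_i/(2C_4)^\alpha$; combining the harmonic measure maximum principle with Solynin's lower bound $\omega(0,D_r,K) \geq h$ and the assumption $\alpha \geq (1-h)/h$ then gives
\[
|U_{x_0,Q}(0,t_{i+1})| \leq M^h (2\beta C_4 A_i)^{1-h} = \beta^{1-h}\,(2C_4)^{(1-h)-\alpha h}\,A_i \leq \beta^{1-h} A_i.
\]
The auxiliary hypothesis $\beta^{1-h} \geq 1/(2C_4)^\alpha$ accounts for the degenerate case $0 \in K$, where $|U_{x_0,Q}(0,t_{i+1})| \leq M$ directly, and reversing $Q$ yields $|U(x_0,t_{i+1})| \leq \beta^{1-h} A_i$ uniformly in $x_0$.

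For the magnetic bound I would invoke Corollary 6, which provides $||B||_{L^\infty((t_0,t_0+T_3) \times \R^3)} \leq 2C_3\,||B(t_0)||_\infty$. Combined with hypothesis (3)(b), this yields $||B(t)||_\infty \leq (1-\beta^{1-h}) A_0$ throughout the Corollary's interval of validity, and coupling with the velocity estimate closes the induction via
\[
A_{i+1} \leq \beta^{1-h} A_i + (1-\beta^{1-h}) A_0 \leq A_0.
\]

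The main obstacle is verifying that the Corollary 6 bound persists throughout the entire iteration window $[t_0,t_k]$. Both $T_3(t_i)$ and $T_\beta(t_i)$ scale like $A_i^{-2}$, so once the induction is running one has $T_3(t_i)$ comparable to $T_\beta(t_i)$; either by choosing the constants so that $T_3(t_0)$ already covers $[t_0,t_k]$, or by reapplying Corollary 6 at each $t_i$ with the multiplicative gain of $2C_3$ absorbed by the slack built into (3)(b), the bound $||B(t_i)||_\infty \leq (1-\beta^{1-h})A_0$ is maintained throughout. This careful tracking of time scales and of the initial smallness of $||B(t_0)||_\infty$ is the essential place where linearity of the $B$ equation replaces the sparseness hypothesis on the magnetic field used in Theorems 11 and 12.
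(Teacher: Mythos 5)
Your proposal follows essentially the same route as the paper: the harmonic-measure/Solynin argument applied to $U$ on the disk where the $\beta$-improved analyticity bound of Lemma 7/Theorem 8 holds, Corollary 6 together with hypothesis (3)(b) to control $B$, and the combination $\beta^{1-h}A_i+(1-\beta^{1-h})A_0\leq A_0$ to close the iteration. The one obstacle you flag --- that the Corollary 6 bound must be shown to persist across all the $t_i$, since naively reapplying the corollary at each step compounds the factor $2C_3$ --- is a genuine issue, but the paper itself elides it with the phrase ``follows in the exact same manner,'' so your treatment is no less complete than the source.
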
 

In the above we specified a particular time at which to begin our iterative argument in order to not trivialize the condition on the magnetic field.  Note that in the context of the above theorem $\beta$ and $T_\beta$ reference Theorem 8. Also, when $C_4\geq 1$ we have $\beta^{1-h}\geq 1/(2C_4)^\alpha$.  

\begin{proof}We obtain for all $x_0$ the bounds
\begin{align*}|U(x_0,t_1)|&\leq \beta^h(||U(t_0)||_\infty +||B(t_0)||_\infty)\mbox{, and,}
\\|B(x_0,t_1)|&\leq (1-\beta^h)(||U(t_0)||_\infty +||B(t_0)||_\infty).\end{align*}
Applying the now familiar argument with the modification that the disk on which we are taking $U_{x_0,t_1}$ to be analytic is that on which the bound $|U_{x_0,t_1}|\leq 2\beta (||U(t_0)||_\infty +||B(t_0)||_\infty)$ holds (see Lemma 7), yields the desired bound on $U$:
\[|U(x_0,t_1)|\leq \beta^{1-h}(||U(t_0)||_\infty +||B(t_0)||_\infty)\]
A complementary estimate (noting $T_3\geq T_\beta$) on $|B(x_0,t_1)|$ follows from the second assumption and Corollary 6.  These grant that
\[|B(x_0,t_1)|\leq 2C_3 ||B(t_0)||_\infty\leq (1-\beta^{1-h})(||B(t_0)||_\infty + ||U(t_0)||_\infty).\]
Combining bounds,
\[|U(x_0,t_1)|+|B(x_0,t_1)|\leq ||U(t_0)||_\infty +||B(t_0)||_\infty.\]  Establishing identical relationships for $t_{i+1}$ and $t_i$ follows in the exact same manner and the iterative argument then grants that $T$ is not a singular time.

\end{proof}

\end{document}